\renewcommand{\orcidID}[1]{\orcidlink{#1}}
\spnewtheorem{claim}[theorem]{Claim}{\bfseries}{\itshape}
\Crefname{claim}{Claim}{Claims}
\providecommand{\qedsymbol}{\ensuremath{\square}}
\newenvironment{proofclaim}[1][Proof of the claim]{%
	\par\noindent\textit{#1.}\space\ignorespaces
	\renewcommand{\qedsymbol}{$\lozenge$}%
}{%
	\hfill\qedsymbol\par\medskip
}
\newcommand{\N}{\mathbb{N}}
\newcommand{\Z}{\mathbb{Z}}
\begin{document}
	
	\title{On Edge-Disjoint Maximal Outerplanar Graphs}
	
	\author{
		Yuto Okada\inst{1}\orcidID{0000-0002-1156-0383} \and
		Yota Otachi\inst{1}\orcidID{0000-0002-0087-853X} \and
		Lena Volk\inst{2}\orcidID{0009-0004-3113-9205}
	}
	
	\institute{%
		Nagoya University, Nagoya, Japan.
		\email{pv.20h.3324@s.thers.ac.jp}, 
		\email{otachi@nagoya-u.jp}
		\and
		Technische Universit\"{a}t Darmstadt, Germany.
		\email{volk@mathematik.tu-darmstadt.de}
	}
	
	\maketitle 
	
	\begin{abstract}
		We provide two constructions for \(t\) edge-disjoint maximal outerplanar graphs on every number of \(n \geq 4t\) vertices. The bound on the minimum number of vertices is tight. These constructions yield the existence of optimal outerthick\-ness-\(t\) graphs for every \(t \in \N\).  While one of the constructions works for all values of~\(t\) and extends graphs from Guy and Nowakowski (1990), the other one holds only for powers of~$2$, but yields graphs with maximum degree logarithmic in the number of vertices. Thus, the latter may be helpful in tackling the open question of determining the outerthickness of all complete graphs.
	\end{abstract}
	
	\section{Introduction}
	
	In their study of the thickness of complete graphs, Beineke and Harary~\cite{Beineke_Harary_1965} presented a construction of \(t\) edge-disjoint maximal planar graphs on  \(6t\) vertices. This was extended by Boswell and Simpson~\cite{DBLP:journals/dm/BoswellS98} to a construction of \(t\) edge-disjoint maximal planar graphs on every number of \(n \geq 6t\) vertices. Crucially, such a construction is not possible on less than \(6t-1\) vertices~\cite{DBLP:journals/dm/BoswellS98}, making the result almost tight. In this paper, we consider the analogous question for outerplanar graphs.
	
	We present two constructions of edge-disjoint maximal outerplanar graphs.
	For the first one, we extend a construction of Guy and Nowakowski~\cite{Guy1990} to obtain~\(t\) edge-disjoint maximal outerplanar graphs on \(4t\) vertices, 
	and then we show that every set of~\(t\) edge-disjoint maximal outerplanar graphs on \(n\) vertices can be extended to such a set on \(n+1\) vertices.
	These together imply that for every~$t \ge 1$ and every $n \ge 4t$, there exist $t$ edge-disjoint maximal outerplanar graphs on $n$ vertices.
	The lower bound of \(4t\) on the number of vertices is tight when $t \ge 2$.
	For the second construction, we restrict $t$ to be a power of~$2$ 
	and construct another set of \(t\) edge-disjoint maximal outerplanar graphs on \(4t\) vertices from scratch.
	The second set differs from the first one by having a logarithmic maximum degree instead of a linear one in each of the outerplanar graphs.
	
	Besides its theoretical interest, the second construction is also motivated by the open problem of determining the outerthickness of complete graphs.
	This is closely related to our constructions, as they both yield decompositions of the complete graph~\(K_{4t}\) minus a matching (of size~\(t\)) into~\(t\) outerplanar graphs.
	Guy and Nowakowski~\cite{Guy1990} determined the outerthickness of $K_{n}$ in most cases,
	but as they stated in the follow-up paper~\cite{GuyNowakowski1990b}, the case of~\(n \equiv 3 \bmod 4\) actually remains open.
	It appears that their decompositions, which our first construction heavily relies on, cannot be easily extended to the case when~${n \equiv 3 \bmod 4}$.
	Hence, due to its significantly lower maximum degree and the overall different structure, our second construction may be helpful to determine the outerthickness of some of the unsolved cases of complete graphs. Furthermore, there are several interesting consequence from these two constructions.
	
	The study of optimal graphs of a given type, i.e., graphs with the maximum possible number of edges, is a classical and widely investigated problem in graph drawing. From our constructions, we immediately obtain for every~\(t \in \N\) the existence of outerthick\-ness-\(t\) graphs with \(t(2n-3)\) edges on every number of~\(n \geq 4t\) vertices. This number of edges is the maximum possible, as every outerplanar graph has at most \(2n-3\) edges, making these graphs optimal outerthickness-\(t\) graphs. While every maximal outerplanar graph (one where no edge can be added without violating outerplanarity) is also optimal~\cite[Corollary~11.9(a)]{Harary1969book}, we give examples in this paper that for \(t\geq2\) the notions of maximal and optimal outerthickness-\(t\) graphs do not coincide.
	
	Since all planar graphs edge-decompose into two outerplanar graphs~\cite{DBLP:conf/stoc/Goncalves05}, the graphs with outerthickness-2 are of particular interest. From our constructions, the existence of an infinite family of outerthickness-2 graphs which are not 1-planar follows. This separates these two generalizations of planar graphs. For the converse, every 1-planar graph admits a partition into a planar graph and a forest~\cite{DBLP:journals/dam/Ackerman14}. Hence, every 1-planar graph has outerthickness at most~3, but it is still open whether there are 1-planar graphs with outerthickness precisely 3. 
	Gethner and Sulanke~\cite{DBLP:journals/gc/GethnerS09} showed that the maximum chromatic number of outerthickness-2 graphs is in \(\{6,7,8\}\) and raised the question of increasing the lower bound. Since all 1-planar graphs are 6-colorable~\cite{borodin1984solution}, graphs with outerthickness-2 which are not 1-planar are of special interest towards this question.
	
	\section{Preliminaries}
	
	We assume that the reader is familiar with the standard terminology of graph theory (for instance, see \cite{Diestel2025}).
	For a positive integer \(n\), let \([n]_0\) denote the set of nonnegative integers smaller than \(n\), i.e., \([n]_0 = \{0,\dots, n-1\}\).
	
	\paragraph{Outerplanar graphs.}
	
	A graph $G$ is an \emph{outerplanar} graph if it admits a drawing in the plane such that there is no edge crossing and all vertices appear on the outer face.
	An outerplanar graph is \emph{maximal} if we cannot add an edge without breaking its outerplanarity.
	It is known that every maximal outerplanar graph on $n \geq 2$ vertices has exactly $2n-3$ edges~\cite[Corollary~11.9(a)]{Harary1969book}.
	A maximal outerplanar graph with at least three vertices has a unique Hamiltonian cycle on the outer face in every drawing in the plane~\cite{SYSLO197947}. We call this cycle the \emph{outer cycle}.
	
	\paragraph{Edge-disjoint outerplanar graphs.} 
	By \emph{\(t\) edge-disjoint outerplanar graphs on \(n\) vertices}, we refer to \(t\) outerplanar graphs on the same vertex set \(V\) with~\(|V|=n\) such that all the edge sets of the graphs are pairwise disjoint.
	
	\paragraph{Outerthickness.}
	
	The \emph{outerthickness} of a graph $G$ is the minimum number $t$ such that the edge set $E(G)$ admits a partition $\{E_0, E_1, \dots, E_{t-1}\}$ where the graph $(V(G), E_i)$ is an outerplanar graph for every $i \in [t]_0$.
	Note that in this case~$|E(G)| \leq t (2 |V(G)| - 3)$ holds by definition when $|V(G)| \geq 2$.
	
	Let $G$ be a graph with $n \geq 3$ vertices, $m$ edges, and outerthickness $t \geq 1$.
	We call~$G$ a \emph{maximal outerthickness-$t$} graph if no edge can be added to $G$ without increasing its outerthickness.
	We call~$G$ an \emph{optimal outerthickness-$t$} graph if $m = t (2n-3)$.
	Note that every optimal outerthickness-\(t\) graph has an edge partition into \(t\) maximal outerplanar graphs. Throughout the paper, we consider the notions of maximality and optimality regarding outerthickness only for graphs with at least three vertices.

	\paragraph{1-planar graphs.} A graph is called \textit{1-planar} if it can be drawn in the plane such that each edge is crossed by at most one other edge.

	\section{Edge-disjoint maximal outerplanar graphs}
	\label{sec:edge-dis-max-out-plan}
	
	We begin by giving two different constructions for \(t\) edge-disjoint maximal outerplanar graphs on \(4t\) vertices. The first construction holds for all \(t \geq 1\) and extends the graphs in the proof of Theorem 1 in~\cite{Guy1990}. Since the maximum degree of the outerplanar graphs in this construction is linear in the number of vertices, we provide a second construction where the maximum degree is only logarithmic in the number of vertices. This construction is only possible whenever~\(t\) is a power of~$2$.
		
	\begin{lemma}
		\label{lem:optimal-ot-construct}
		For every $t \geq 1$, there exist \(t\) edge-disjoint maximal outerplanar graphs on~$4t$ vertices with maximum degree \(t+3\).
	\end{lemma}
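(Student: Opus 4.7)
The plan is to adapt the Guy--Nowakowski construction~\cite{Guy1990}: explicitly define $t$ maximal outerplanar graphs $G_0, G_1, \dots, G_{t-1}$ on the common vertex set $V = [4t]_0$, each given by an outer Hamiltonian cycle $C_i$ together with a triangulation of the disk bounded by $C_i$ using $4t-3$ chords. With such a description, each $G_i$ is automatically a maximal outerplanar graph with $2(4t) - 3 = 8t - 3$ edges. Summing over $i$, the $t$ graphs contribute $t(8t-3) = 8t^2 - 3t$ edges in total, which equals $|E(K_{4t})| - t$, so edge-disjointness is equivalent to covering all of $E(K_{4t})$ except a perfect matching of size~$t$.

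To lay out the construction, I would place the vertices $0, 1, \ldots, 4t-1$ cyclically on a circle and, for each $i \in [t]_0$, define $C_i$ by an arithmetic rule modulo $4t$ (for instance, reading off the vertices according to a shifted interleaving pattern) together with a balanced set of chords attached to $C_i$. The choice should be made so that the $t$ graphs are related by a cyclic symmetry, which allows both edge-disjointness and the degree bound to be checked at a single representative $G_i$ and vertex. I would then verify the claim in two steps: edge-disjointness, by classifying the edges of $K_{4t}$ according to their endpoint difference modulo~$4t$ and tracking how each difference class is distributed across the $G_i$ (with one specific difference supplying the uncovered matching); and the maximum-degree bound, by combining the two outer-cycle contributions at a vertex with at most $t+1$ chord contributions in any single~$G_i$.

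The main obstacle is controlling the maximum degree to be at most $t+3$. Since a maximal outerplanar graph on $4t$ vertices may have a vertex of degree as large as $4t-1$ (as in a fan triangulation), the triangulation of each $C_i$ must be chosen to be substantially balanced. As every vertex contributes two outer-cycle edges, at most $t+1$ chords may be incident to any single vertex within one $G_i$. I expect the hardest step to be certifying this chord bound simultaneously with edge-disjointness. The cyclic symmetry of the construction should reduce the problem to a check at a single vertex of $G_0$, but that check still involves matching the explicit Guy--Nowakowski triangulation pattern against the arithmetic of the rotation, and it is there that the constant $t+3$ is forced by how many chords concentrate at the ``center'' of each $G_i$.
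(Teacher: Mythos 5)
Your framework is the right one---the paper's proof also goes through the Guy--Nowakowski graphs, and your edge count ($t(8t-3) = \binom{4t}{2} - t$, so the union must miss exactly $t$ edges of $K_{4t}$) correctly identifies what edge-disjointness amounts to. But as written the proposal has a genuine gap: the construction is never actually given. Every load-bearing ingredient is left as something you ``would'' do---the arithmetic rule defining each outer cycle $C_i$, the ``balanced set of chords,'' the classification of difference classes, and the check that at most $t+1$ chords meet any single vertex of one $G_i$. You yourself flag that certifying the chord bound simultaneously with edge-disjointness is the hardest step, and that step is exactly what is absent. Without an explicit definition of the $G_i$ there is nothing to verify, and the assertion that a suitably balanced triangulation compatible with the cyclic symmetry exists is precisely the content of the lemma.

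The paper sidesteps re-deriving all of this by importing the Guy--Nowakowski graphs wholesale: for each $i \in [t]_0$, their ``graph $i$'' is an outerplanar graph on $4t$ vertices with $8t-4$ edges and maximum degree $t+2$, and the pairwise edge-disjointness of these graphs is already proved in \cite{Guy1990}. Since these graphs are exactly one edge short of maximal, the only new work is to add the diagonal $\{i,\, i+2t\}$ to graph $i$: this preserves outerplanarity (it is a diagonal of the central square in their embedding), preserves edge-disjointness because no original graph contains an edge whose endpoints differ by $2t$ and the added diagonals are pairwise distinct, and raises the maximum degree to $t+3$. If you want a self-contained argument you would need to write out the Guy--Nowakowski outer cycles and chord sets explicitly and carry through the difference-class bookkeeping you sketch; the efficient route is to cite their edge-disjointness result and verify only the effect of the single added edge per graph, which is what the paper does.
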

	
	\begin{proof}
		We consider graphs from the proof of Theorem 1 in~\cite{Guy1990}. For the ease of transition to their construction let \(r=t\). For each \(i \in [r]_0\), Guy and Nowakowski~\cite{Guy1990} construct an outerplanar graph, which they call \emph{graph $i$}, on~\(4r\) vertices and with \(2 \cdot 4r - 4\) edges. These graphs are depicted in \Cref{fig:graphsfromGuy} and shown to be pairwise edge-disjoint in \cite{Guy1990}. Note that these graphs have maximum degree \(r+2\), which is attained at the vertices \(i\), \(i+r\), \(i+2r\) and \(i+3r\). Now, for each~\(i \in [r]_0\), we add the edge~\(\{i,i+2r\}\) to graph~\(i\). This preserves outerplanarity of graph \(i\), since the edge can be embedded as the diagonal of the square in \Cref{fig:graphsfromGuy}. (In~\cite{Guy1990}, this edge is called dexter diameter of graph \(i\).) Overall, this yields~\(r\) many maximal outerplanar graphs. Further, these graphs remain edge-disjoint as none of the graphs contained edges with difference \(2r\) between the two vertices before~\cite{Guy1990}. The maximum degree of these graphs including the diagonals is~\(r+3\) and is attained at the vertices~\(i\) and \(i+2r\).
	\end{proof}
	
	\begin{figure}[bt]
		\centering
		\begin{tikzpicture}[scale=1.6]
			
			\def\a{1.25}        
			\def\n{9}        
			
			\coordinate (O)  at (-\a, \a);
			\coordinate (R)  at ( \a, \a);
			\coordinate (2R) at ( \a,-\a);
			\coordinate (3R) at (-\a,-\a);
			
			\node[circle, fill=black, inner sep=1.5pt] at (-\a, \a) {};
			\node[below right]  at (O)  {$0$};
			\node[circle, fill=black, inner sep=1.5pt] at (R) {};
			\node[below left] at (R)  {$r$};
			\node[circle, fill=black, inner sep=1.5pt] at (2R) {};
			\node[above left] at (2R) {$2r$};
			\node[circle, fill=black, inner sep=1.5pt] at (3R) {};
			\node[above right]  at (3R) {$3r$};
			
			\draw[thick] (O) -- (R) -- (2R) -- (3R) -- cycle;
			
			\draw[thick] (O) arc[start angle=180, end angle=0, radius=\a];
			\draw[thick] (R) arc[start angle=90, end angle=-90, radius=\a];
			\draw[thick] (2R) arc[start angle=0, end angle=-180, radius=\a];
			\draw[thick] (3R) arc[start angle=-90, end angle=-270, radius=\a];
			
			\foreach \k in {2,3,6,7,...,\n}{
				
				\coordinate (A\k) at ({\a*cos(180-180*\k/(\n+1))},{\a+\a*sin(180-180*\k/(\n+1))}); 
				\node[circle, fill=black, inner sep=1.5pt] at (A\k) {};
				\path (O) -- (A\k) coordinate[pos=1.2] (LA\k);
				
				\coordinate (B\k) at ({\a+\a*cos(90-180*\k/(\n+1))},
				{\a*sin(90-180*\k/(\n+1))});
				\node[circle, fill=black, inner sep=1.5pt] at (B\k) {};	
				\path (R) -- (B\k) coordinate[pos=1] (LB\k);
				
				\coordinate (C\k) at ({\a*cos(-180*\k/(\n+1))},{-\a+\a*sin(-180*\k/(\n+1))});
				\node[circle, fill=black, inner sep=1.5pt] at (C\k) {};	
				\path (2R) -- (C\k) coordinate[pos=1.2] (LC\k);
				
				\coordinate (D\k) at ({-\a+\a*cos(-90-180*\k/(\n+1))}, {\a*sin(-90-180*\k/(\n+1))});
				\node[circle, fill=black, inner sep=1.5pt] at (D\k) {};	
				\path (3R) -- (D\k) coordinate[pos=1] (LD\k);
				
			}
			
			\foreach \k in {3,6,7,...,\n}{
				\draw (O) -- (A\k);
				\draw (R) -- (B\k);	
				\draw (2R) -- (C\k);
				\draw (3R) -- (D\k);
			}
			
			\node[above left] at (A2) {$\lfloor 3r/2 \rfloor$};
			\node[] at (LA3) {};
			\node[] at (LA6) {$r+2$};
			\node[] at (LA7) {$2r-2$};
			\node[] at (LA8) {$r+1$};
			\node[] at (LA9) {$2r-1$};
			
			\node[above right] at (LB2) {$\lfloor 5r/2 \rfloor$};
			\node[] at (LB3) {};
			\node[below right] at (LB6) {$2r+2$};
			\node[below right] at (LB7) {$3r-2$};
			\node[below right] at (LB8) {$2r+1$};
			\node[below right] at (LB9) {$3r-1$};
			
			\node[right] at (LC2) {$\lfloor 7r/2 \rfloor$};
			\node[] at (LC3) {};
			\node[] at (LC6) {$3r+2$};
			\node[] at (LC7) {$4r-2$};
			\node[] at (LC8) {$3r+1$};
			\node[] at (LC9) {$4r-1$};
			
			\node[below left] at (D2) {$\lfloor r/2 \rfloor$};
			\node[] at (LD3) {};
			\node[above left] at (LD6) {$2$};
			\node[above left] at (LD7) {$r-2$};
			\node[above left] at (LD8) {$1$};
			\node[above left] at (LD9) {$r-1$};
			
			\node at (-0.5,2.1) {\(\dots\)};
			\node at (0.5,-2.1) {\(\dots\)};
			\node at (2.1,0.5) {\(\vdots\)};
			\node at (-2.1,-0.5) {\(\vdots\)};
		\end{tikzpicture}
		
		\caption{The so-called \emph{graph zero} from \cite{Guy1990}. For \(i \in [r]_0\), the so-called \emph{graph \(i\)} is a copy of graph zero obtained by increasing the label of each vertex by \(i \bmod 4r\). These graphs are shown to be pairwise edge-disjoint in \cite{Guy1990} and have maximum degree \(r+2\).}
		\label{fig:graphsfromGuy}
	\end{figure}
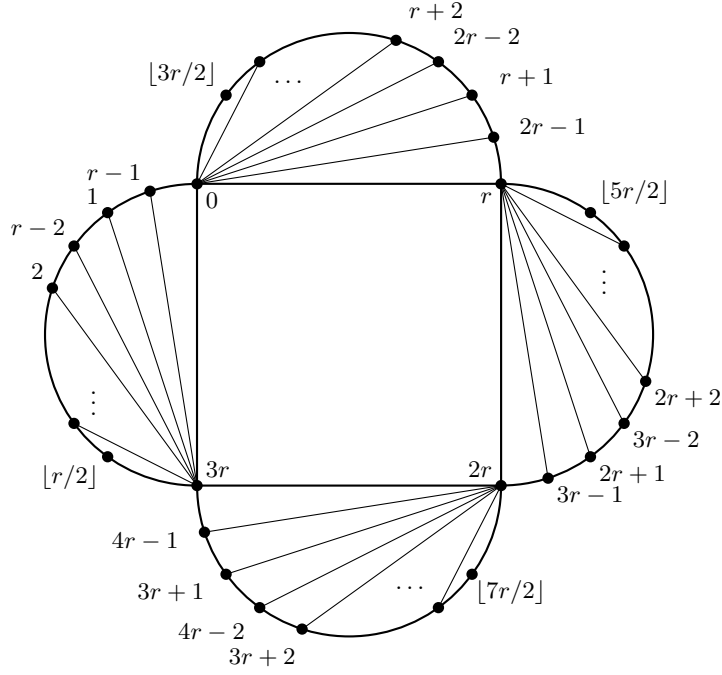
	
	\begin{figure}[t]
		\centering
		\begin{tikzpicture}[scale=0.8]
			\def \n {4} 
			\def \h {3} 
			\def \radius {1.5cm} 
			\def\labeldist{0.35cm}  
			
			\foreach \i in {0,...,\h} {
				\node[draw, circle, fill=black, inner sep=1.5pt] (\i) at ({360/\n * \i}: \radius) {};
				
				\node[font=\small]
				at ({360/\n * \i}:{\radius + \labeldist}) {\i};
			}
			
			\foreach \i/\j in {0/1, 1/2, 2/3, 3/0} {
				\draw (\i) -- (\j);
			}
			
			\foreach \i/\j in {0/2} {
				\draw (\i) -- (\j);
			}
		\end{tikzpicture}
		\hspace{0.2cm}
		\begin{tikzpicture}[scale=0.8]
			\def \n {8} 
			\def \h {7} 
			\def \radius {1.5cm} 
			\def\labeldist{0.35cm}  
			
			\foreach \i in {0,2,4,6} {
				\node[draw, circle, fill=black, inner sep=1.5pt] (\i) at ({360/\n * \i}: \radius) {};
				
				\node[font=\small]
				at ({360/\n * \i}:{\radius + \labeldist}) {\i};
			}
			
			\foreach \i in {1,3,5,7} {
				\node[draw, blue, circle, fill=blue, inner sep=1.5pt] (\i) at ({360/\n * \i}: \radius) {};
				
				\node[font=\small, text=blue]
				at ({360/\n * \i}:{\radius + \labeldist}) {\i};
			}
			
			\foreach \i/\j in {0/1, 1/2, 2/3, 3/4, 4/5, 5/6, 6/7, 7/0} {
				\draw[blue] (\i) -- (\j);
			}
			
			\foreach \i/\j in {0/2, 2/4, 4/6, 6/0, 0/4} {
				\draw (\i) -- (\j);
			}
		\end{tikzpicture}
		\hspace{0.2cm}
		\begin{tikzpicture}[scale=0.8]
			\def \n {8} 
			\def \h {7} 
			\def \radius {1.5cm} 
			\def\labeldist{0.35cm}  
			
			\foreach \i in {0,2,4,6} {
				\node[draw, circle, fill=black, inner sep=1.5pt] (\i) at ({360/\n * \i}: \radius) {};
				
				\pgfmathtruncatemacro{\label}{mod(5*\i +1,8)}
				
				\node[font=\small]
				at ({360/\n * \i}:{\radius + \labeldist}) {\label};
			}
			
			\node[draw, blue, circle, fill=blue, inner sep=1.5pt] (1) at ({360/\n * 1}: \radius) {};
			\node[font=\small, text=blue] at ({360/\n * 1}:{\radius + \labeldist}) {6};
			\node[draw, blue, circle, fill=blue, inner sep=1.5pt] (3) at ({360/\n * 3}: \radius) {};
			\node[font=\small, text=blue] at ({360/\n * 3}:{\radius + \labeldist}) {0};
			\node[draw, blue, circle, fill=blue, inner sep=1.5pt] (5) at ({360/\n * 5}: \radius) {};
			\node[font=\small, text=blue] at ({360/\n * 5}:{\radius + \labeldist}) {2};
			\node[draw, blue, circle, fill=blue, inner sep=1.5pt] (7) at ({360/\n * 7}: \radius) {};
			\node[font=\small, text=blue] at ({360/\n * 7}:{\radius + \labeldist}) {4};
			
			\foreach \i/\j in {0/1, 1/2, 2/3, 3/4, 4/5, 5/6, 6/7, 7/0} {
				\draw[blue] (\i) -- (\j);
			}
			
			\foreach \i/\j in {0/2, 2/4, 4/6, 6/0, 0/4} {
				\draw (\i) -- (\j);
			}
		\end{tikzpicture}
		\caption{On the left the base graph \((V^0,E^0_0)\) from the proof of~\Cref{lem:optimal-ot-power-2-construct} is depicted. The graph~\((V^1,E^1_0)\) in the middle, and \((V^1,E^1_1)\) on the right, are obtained from \((V^0,E^0_0)\) as described in the proof of~\Cref{lem:optimal-ot-power-2-construct}.}
		\label{fig:labeling-8}
	\end{figure}
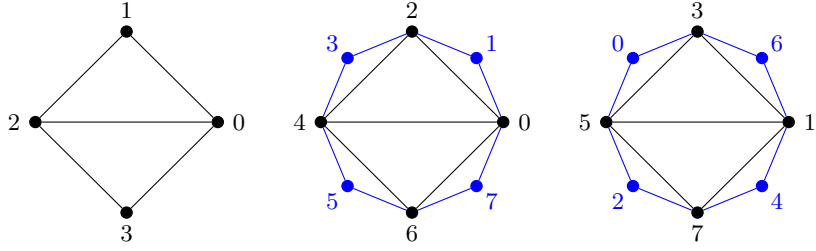
	
	\begin{lemma}
		\label{lem:optimal-ot-power-2-construct}
		For every $s \geq 0$, there exist \(2^s\) edge-disjoint maximal outerplanar graphs on~$4 \cdot 2^s$ vertices with maximum degree \(2s+3\).
	\end{lemma}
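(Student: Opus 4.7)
The plan is to proceed by induction on $s$. For the base case $s = 0$, take $(V^0, E^0_0)$ to be the $4$-cycle on $\{0, 1, 2, 3\}$ together with the chord $\{0, 2\}$, shown on the left of \Cref{fig:labeling-8}; this is maximal outerplanar with $5 = 2 \cdot 4 - 3$ edges and maximum degree $3 = 2 \cdot 0 + 3$.

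For the inductive step, I would assume $2^s$ edge-disjoint maximal outerplanar graphs $(V^s, E^s_0), \dots, (V^s, E^s_{2^s - 1})$ on $V^s = \Z/N_s$ (with $N_s = 4 \cdot 2^s$) of maximum degree at most $2s + 3$, and construct $2^{s+1}$ such graphs on $V^{s+1} = \Z/N_{s+1}$ (with $N_{s+1} = 2 N_s$) of maximum degree at most $2s + 5$. Each new graph $(V^{s+1}, E^{s+1}_j)$ would be produced in the manner that turns $(V^0, E^0_0)$ into $(V^1, E^1_0)$ and $(V^1, E^1_1)$ in \Cref{fig:labeling-8}: pick one of the base graphs $(V^s, E^s_i)$ and embed it into $V^{s+1}$ via an affine map $v \mapsto \alpha v + \beta \pmod{N_{s+1}}$ with $\alpha$ even, so that the image appears at every other position of a chosen constant-difference Hamiltonian cycle on $V^{s+1}$; then add that Hamiltonian cycle as the new outer cycle. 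In \Cref{fig:labeling-8} the two graphs $(V^1, E^1_0)$ and $(V^1, E^1_1)$ correspond to embeddings onto the even and the odd vertices of $V^1$, with outer Hamiltonian cycles of difference $1$ and $3$ respectively.

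Once the construction is pinned down, I would verify the following. Each new graph has $N_{s+1} + (2 N_s - 3) = 2 N_{s+1} - 3$ edges, which is the maximum number of edges of an outerplanar graph on $N_{s+1}$ vertices. The affine embedding preserves the cyclic order of $V^s$ around the chosen Hamiltonian cycle, so the embedded edges of $(V^s, E^s_i)$ form a non-crossing chord system, making each new graph maximal outerplanar. Each vertex of $V^{s+1}$ inherits at most $2s + 3$ incidences from the embedded base graph and exactly $2$ from the new Hamiltonian cycle, giving maximum degree at most $2s + 5$.

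The main obstacle is pairwise edge-disjointness of the $2^{s+1}$ new graphs. Three cases are immediate: Hamiltonian cycles of distinct odd differences in $\Z/N_{s+1}$ are automatically edge-disjoint (and there are exactly $N_{s+1}/4 = 2^{s+1}$ distinct such cycles, matching the required count); every embedded edge has even difference in $\Z/N_{s+1}$, so no embedded edge coincides with any Hamiltonian-cycle edge; and embeddings into disjoint parity classes of $V^{s+1}$ produce disjoint edges. The hard case is embedded-vs-embedded on the same parity class. At $s = 1$ one already sees that letting each new graph use the ``natural'' Hamiltonian-cycle-aligned embedding of its base causes a collision: both the embedding of $E^1_0$ via $v \mapsto 2v$ and the embedding of $E^1_1$ via $v \mapsto 10 v$ (aligned with the Hamiltonian cycle of difference $5$ in $V^2$) send their outer-cycle edges onto the same set of $8$ diff-$2$ edges on the even vertices of $V^2$. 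Circumventing such collisions requires a coordinated choice of the affine parameters and the Hamiltonian-cycle difference for each new graph. I would expect the full argument to maintain a refined inductive invariant pinning down which edge-difference multisets each $E^s_i$ realizes, and to exploit the power-of-$2$ structure of $N_{s+1}$ (so that every odd residue is a unit).
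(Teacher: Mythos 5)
Your skeleton matches the paper's: the same base graph, and an inductive step that places the old graph on alternate vertices of a new constant-difference Hamiltonian cycle and adds that cycle, with the same edge count and degree bookkeeping. But the proposal has a genuine gap, and you name it yourself: you never pin down which affine map and which cycle difference each new graph gets, and you correctly observe that the ``natural'' choice (align each parent's embedding with an independently chosen Hamiltonian cycle) already collides at $s=1$. Ending with ``I would expect the full argument to maintain a refined inductive invariant'' leaves the central difficulty --- pairwise disjointness of the embedded chord systems within one parity class --- unresolved, so this is not yet a proof.

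The missing idea is that the coordination goes the other way around: you do not choose the new cycle difference freely and then align the embedding to it. Instead, fix the embedding to be $v \mapsto 2v$ for one child and $v \mapsto 2v+1$ for the other, \emph{for every} parent $(V^s,E^s_k)$. Then the embedded copies of $E^s_0,\dots,E^s_{2^s-1}$ on the even (resp.\ odd) vertices are pairwise disjoint simply because the $E^s_k$ are and doubling is injective, and even-even chords never meet odd-odd chords; this disposes of your hard case entirely. The new outer-cycle difference is then forced rather than chosen: if the parent has constant difference $d$ (odd), the doubled outer cycle has difference $2d$, and the inserted vertices must sit at $u+x$ where $x$ is an odd square root of $2d$ modulo $2^{s+3}$; there are exactly two such roots, $d$ and $d+2^{s+2}$, and giving one to each child keeps the invariant ``the outer cycle has constant odd difference'' and distinguishes the two children of a common parent. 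Finally, rather than a case analysis of outer-cycle-versus-outer-cycle and outer-cycle-versus-chord collisions, the paper clinches disjointness by a coverage-plus-counting argument: it exhibits a target graph $G^{s}$ ($K_{2^{s+2}}$ minus half of the antipodal perfect matching) with exactly $2^{s}(2\cdot 2^{s+2}-3)$ edges, shows inductively that every edge of $G^{s}$ lies in some $E^s_k$ (using two auxiliary claims, that every odd difference is realized as some outer cycle and that every antipodal edge $\{i,i+2^{s+1}\}$ with $i<2^{s}$ is used), and concludes that the $2^s$ sets of size $2\cdot 2^{s+2}-3$ cannot overlap. You would need to supply both the coordinated construction and some such disjointness argument to complete the proof.
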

	\begin{proof}
		Let \(s \in \N_0\) and let \(V^s=[2^{s+2}]_0\). First we give~\(2^s\) maximal outerplanar graphs on the vertex set \(V^s\). Namely, for each \(k \in [2 ^s]_0\), we construct a graph~\((V^s,E^s_k)\). Then we show that the edge sets \(E^s_0, \dots, E^{s}_{2^s-1}\) are pairwise disjoint.		
		We say an outerplanar graph with vertex set \([n]_0\) has property \((\star)\) if for some~\(d \in [n]_0\) the outer cycle is of the form~\((0,d,2d,\dots,(n-1)d,0)\) (all entries considered as values in \(\Z_n\)). Note that in this case \(d\) has to be an element of order \(n\) in \(\Z_n\), i.e., if \(n=2^{s+2}\) then \(d\) is odd.
		Below, we construct the graphs $(V^s,E^s_k)$ so that each of them has property \((\star)\) for some $d$.
		
		We define the graphs \((V^s,E^s_k)\) for \(k \in [2^s]_0\) by induction on \(s\) (see \Cref{fig:labeling-8,fig:labeling-induction-step,fig:labeling-16}). For~\(s=0\), the single graph \((V^0,E^0_0)\) is depicted in~\Cref{fig:labeling-8}. This graph has property~\((\star)\) for~\(d=1\). 
		Now, let $s \ge 0$. From the current set of graphs $\{(V^{s}, E_{k}^{s}) \mid k \in [2^{s}]_{0}\}$ with property $(\star)$,
		we obtain the next set of graphs $\{(V^{s+1}, E_{k}^{s+1}) \mid k \in [2^{s+1}]_{0}\}$ with property~$(\star)$
		by constructing two new graphs $(V^{s+1}, E_{k}^{s+1})$ and $(V^{s+1}, E_{2^{s}+k}^{s+1})$ from $(V^{s}, E_{k}^{s})$ for each~$k \in [2^{s}]_{0}$ as follows.
		\begin{itemize}
			\item Start with the graph \((V^s,E_k^s)\). Assume that $(V^{s}, E_{k}^{s})$ has property $(\star)$ for some odd~$d = d_{k}^{s} \in [2^{s+2}]_{0}$.
			\item For $(V^{s+1}, E_{k}^{s+1})$, double the label of all vertices. For $(V^{s+1}, E_{2^{s}+k}^{s+1})$, double the label of all vertices and add~$1$. Note that all these new labels belong to \([2^{s+3}]_0\).
			\item The outer cycle of the new graphs is of the form $(0, (2d_k^s), 2(2d_k^s), \dots, (n-1)(2d_k^s), 0)$, where all entries are considered as values in \(\Z_{2^{s+3}}\).
			The equation \(2x \equiv 2d_k^s \bmod 2^{s+3}\) has two solutions in \([2^{s+3}]_0\), namely \(x=d_k^s\) and \(y=d_k^s+2^{s+2}\). Since \(d_k^s\) is odd, both solutions are odd, and hence generators of \(\Z_{2^{s+3}}\).
			\item For each edge \(\{u,u+2d_k^s\}\) (modulo \(2^{s+3}\)) on the outer cycle we add for $(V^{s+1}, E_{k}^{s+1})$ the vertex \(u+x \bmod 2^{s+3}\) and for $(V^{s+1}, E_{2^{s}+k}^{s+1})$ the vertex \(u+y \bmod 2^{s+3}\) to the graph and connect it to the two endpoints of the edge. See~\Cref{fig:labeling-induction-step} for an illustration of the labeling.
			\item Observe that both $(V^{s+1}, E_{k}^{s+1})$ and $(V^{s+1}, E_{2^{s}+k}^{s+1})$ have property \((\star)\) for $d = x$ and~$d = y$, respectively. 
			Further, both graphs now contain all vertices \([2^{s+3}]_0\) as \(x\) and~\(y\) are generators of~\(\Z_{2^{s+3}}\).
		\end{itemize}
		Overall we constructed a maximal outerplanar graph \((V^{s+1},E_k^{s+1})\)  for each \({k \in [2^{s+1}]_0}\). The inductive construction from 0 to 1 is visualized in~\Cref{fig:labeling-8}, and from 1 to 2 in~\Cref{fig:labeling-16}. Before we proceed to show that the edge sets \(E^s_0, \dots, E^{s}_{2^s-1}\) are pairwise disjoint, we need two auxiliary statements about the graphs \((V^s,E_k^s)\).
		
		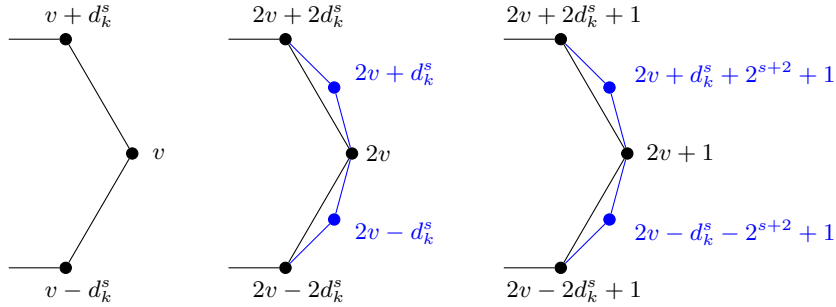
\begin{figure}[t]
			\centering
			\begin{tikzpicture}[scale=1]
				\def \n {6} 
				\def \h {5} 
				\def \radius {1.75cm} 
				\def\labeldist{0.35cm}  
				
				\foreach \i in {0,1,5} {
					\node[draw, circle, fill=black, inner sep=1.5pt] (\i) at ({360/\n * \i}: \radius) {};
				}
				
				\foreach \i/\j in {0/1, 5/0} {
					\draw (\i) -- (\j);
				}
				
				\draw (1) -- ++(-0.75,0);
				\draw (5) -- ++(-0.75,0);
				
				\node[font=\small] at ({360/\n * 0}:{\radius + \labeldist}) {\(v\)};
				\node[font=\small] at ({360/\n * 1}:{\radius + \labeldist}) {\(v+d_k^s\)};
				\node[font=\small] at ({360/\n * 5}:{\radius + \labeldist}) {\(v-d_k^s\)};
			\end{tikzpicture}
			\hspace{0.5cm}
			\begin{tikzpicture}[scale=1]
				\def \n {12} 
				\def \h {11} 
				\def \radius {1.75cm} 
				\def\labeldist{0.35cm}  
				
				\foreach \i in {0,2,10} {
					\node[draw, circle, fill=black, inner sep=1.5pt] (\i) at ({360/\n * \i}: \radius) {};
				}
				
				\foreach \i in {1,11} {
					\node[draw, blue, circle, fill=blue, inner sep=1.5pt] (\i) at ({360/\n * \i}: \radius) {};
				}
				
				\foreach \i/\j in {0/1, 1/2, 10/11, 11/0} {
					\draw[blue] (\i) -- (\j);
				}
				
				\foreach \i/\j in {0/2, 10/0} {
					\draw (\i) -- (\j);
				}
				
				\draw (2) -- ++(-0.75,0);
				\draw (10) -- ++(-0.75,0);
				
				\node[font=\small] at ({360/\n * 0}:{\radius + \labeldist}) {\(2v\)};
				\node[font=\small] at ({360/\n * 2}:{\radius + \labeldist}) {\(2v+2d_k^s\)};
				\node[font=\small] at ({360/\n * 10}:{\radius + \labeldist}) {\(2v-2d_k^s\)};
				\node[font=\small, xshift=0.5cm, text=blue] at ({360/\n * 1}:{\radius + \labeldist}) {\(2v+d_k^s\)};
				\node[font=\small, xshift=0.5cm, text=blue] at ({360/\n * 11}:{\radius + \labeldist}) {\(2v-d_k^s\)};
			\end{tikzpicture}
			\hspace{0.5cm}
			\begin{tikzpicture}[scale=1]
				\def \n {12} 
				\def \h {11} 
				\def \radius {1.75cm} 
				\def\labeldist{0.35cm}  
				
				\foreach \i in {0,2,10} {
					\node[draw, circle, fill=black, inner sep=1.5pt] (\i) at ({360/\n * \i}: \radius) {};
				}
				
				\foreach \i in {1,11} {
					\node[draw, blue, circle, fill=blue, inner sep=1.5pt] (\i) at ({360/\n * \i}: \radius) {};
				}
				
				\foreach \i/\j in {0/1, 1/2, 10/11, 11/0} {
					\draw[blue] (\i) -- (\j);
				}
				
				\foreach \i/\j in {0/2, 10/0} {
					\draw (\i) -- (\j);
				}
				
				\draw (2) -- ++(-0.75,0);
				\draw (10) -- ++(-0.75,0);
				
				\node[font=\small, xshift=0.35cm] at ({360/\n * 0}:{\radius + \labeldist}) {\(2v+1\)};
				\node[font=\small] at ({360/\n * 2}:{\radius + \labeldist}) {\(2v+2d_k^s+1\)};
				\node[font=\small] at ({360/\n * 10}:{\radius + \labeldist}) {\(2v-2d_k^s+1\)};
				\node[font=\small, xshift=1.35cm, text=blue] at ({360/\n * 1}:{\radius + \labeldist}) {\(2v+d_k^s+2^{s+2}+1\)};
				\node[font=\small, xshift=1.35cm, text=blue] at ({360/\n * 11}:{\radius + \labeldist}) {\(2v-d_k^s-2^{s+2}+1\)};
			\end{tikzpicture}
			\caption{The inductive vertex labeling in the proof of~\Cref{lem:optimal-ot-power-2-construct}. On the left, the part of the outer cycle in \((V^s,E_k^s)\) next to \(v \in V^s\) is depicted.  In the middle the corresponding part of the inductively defined graph \((V^{s+1},E_k^{s+1})\) is depicted. For the graph in the middle the labels of the black vertices get doubled (modulo \(2^{s+3}\)) and the new vertices and edges in blue are added (for \(u=2v\) in the fourth step of the construction). Analogously, on the right, the construction of \((V^{s+1},E_{2^s+k}^{s+1})\) is depicted.}
			\label{fig:labeling-induction-step}
		\end{figure}
		
		\begin{claim}
			For every odd \(d \in [2^{s+2}]_0\), there is some \(k \in [2^s]_0\) such that the outer cycle in~\((V^s,E_k^s)\) is \((0,d,2d,\dots,(2^{s+2}-1)d,0)\) (all entries considered as values in \(\Z_{2^{s+2}}\)).
			\label{claim:odd}
		\end{claim}
		
		\begin{proofclaim}
			We show this by induction on \(s\). The base case \(s=0\) follows from \Cref{fig:labeling-8} as the outer cycle \((0,1,2,3,0)\) of the graph~\((V^0,E_0^0)\) realizes $d = 1$ (from left to right) and $d = 3$ (from right to left). Assume that the statement is true for some \(s \in \N_0\). Let~\(d \in [2^{s+3}]_0\) odd. If \(d \in [2^{s+2}]_0\), by induction hypothesis there is some \(k \in [2^s]_0\) such that the outer cycle in \((V^s,E_k^s)\) is~\((0,d,2d,\dots,(2^{s+2}-1)d,0)\) (all entries considered as values in \(\Z_{2^{s+2}}\)). Then, by the fourth step of our construction, the outer cycle in \((V^{s+1},E_k^{s+1})\) is~\((0,d,2d,\dots,(2^{s+3}-1)d,0)\) (all entries considered as values in \(\Z_{2^{s+3}}\)) as well. If \(d \geq 2^{s+2}\), we write \(d=2^{s+2}+d'\) for some~\(d' \in [2^{s+2}]_0\). Again, by induction hypothesis there is some~\(k' \in [2^s]_0\) such that the the outer cycle in \((V^s,E_{k'}^s)\) is \((0,d',2d',\dots,(2^{s+2}-1)d',0)\) (all entries considered as values in \(\Z_{2^{s+2}}\)). Then for \(k=2^s+k' \in [2^{s+1}]_0\), again by the fourth step of the construction, the outer cycle in \((V^{s+1},E_k^{s+1})\) is \((0,d,2d,\dots,(2^{s+3}-1)d,0)\) (all entries considered as values in \(\Z_{2^{s+3}}\)).
		\end{proofclaim}
		
		\begin{claim}
			For every \(i \in [2^s]_0\), there exists some \(k \in [2^s]_0 \) such that \(\{i,i+2^{s+1}\} \in E^{s}_k\).
			\label{claim:middle}
		\end{claim}
		
		\begin{proofclaim}
			We show this by induction on \(s\) again. The base case \(s=0\) follows from \Cref{fig:labeling-8} as the edge \(\{0,2\} \in E_0^0\). Assume the claim holds for some~\(s \in \N_0\). Now let~\(i \in [2^{s+1}]_0\). If \(i\) is even, by induction hypothesis there is some \(k \in [2^s]_0 \) such that the edge \(\{i/2,i/2+2^{s+1}\} \in E^{s}_k\). Then by construction of~\(E^{s+1}_k\), the edge \(\{i,i+2^{s+2}\} \in E^{s+1}_k\).
			If \(i\) is odd, we do the same to obtain some \(k \in [2^s]_0 \) such that the edge \(\{(i-1)/2,(i-1)/2+2^{s+1}\} \in E^{s}_k\). Then by construction of \(E^{s+1}_{2^s+k}\), the edge \(\{i,i+2^{s+2}\} \in E^{s+1}_{2^s+k}\).
		\end{proofclaim}
		
		As a next step, we show that the edge sets \(E^s_0, \dots, E^{s}_{2^s-1}\) are pairwise disjoint. To prove this, we define a graph \(G^s\) on the vertex set~\(V^s\) and with \(2^s(2n-3)\) edges and then show that each edge of \(G^s\) is in one of the edge sets \(E^s_0, \dots, E^{s}_{2^s-1}\). Note that by counting argument this suffices to show that the edge sets are pairwise disjoint. 
		
		Let $G^s$ be the graph with vertex set~\(V^s\) such that two vertices $u, v \in V^{s} = [2^{s+2}]_0$ are adjacent if and only if $u-v \not\equiv 2^{s+1} \bmod 2^{s+2}$.
		Further, we add for each $i \in [2^{s}]_0$, the edge~$\{i,i+2^{s+1}\}$ to $G^s$. Then, we have
		\begin{align*}
			|E(G^s)|=\left(2^{s+1}-1\right)2^{s+2}+2^{s}=2^{s}(2^{s+3}-3).
		\end{align*} Next, we show that each edge of $G^s$ is contained in some \(E^s_k\) for \(k \in [2^s]_0\). Again, we show this by induction on \(s\). 
		
		For the base case of \(s=0\), the statement can be easily derived from the graph~\((V^0,E^0_0)\) depicted in \Cref{fig:labeling-8}. Now assume the statement holds true for some \(s \in \N_0\). Let~\(\{u,v\} \in E(G^{s+1})\). We aim to show that there is some \(k \in [2^{s+1}]_0\) such that \(\{u,v\} \in E^{s+1}_k.\) First, assume that \(u-v \equiv 2^{s+2} \bmod 2^{s+3}\). Then by definition of \(G^{s+1}\), we have \(\{u,v\}=\{i,i+2^{s+2}\}\) for some \(i \in [2^{s+1}]_0\). Then by~\Cref{claim:middle} there is some \(k \in [2^{s+1}]_0\) such that~\(\{u,v\} \in E^{s+1}_k.\) Hence in the following, we assume that \(u-v \not \equiv 2^{s+2} \bmod 2^{s+3}\).
		\begin{itemize}
			\item If \(u\) and \(v\) are both even, then \(\{u/2,v/2\} \in E(G^{s})\) as \(u/2-v/2 \not \equiv 2^{s+1} \bmod 2^{s+2}\). Hence, by induction hypothesis, there exists some \(k \in [2^s]_0\) such that \(\{u/2,v/2\} \in E_k^s\). Then we have \(\{u,v\} \in E_k^{s+1}\) by construction of~\(E_k^{s+1}\).
			\item If \(u\) and \(v\) are both odd, then \(\{(u-1)/2,(v-1)/2\} \in E(G^{s})\) as we have \((u-1)/2-(v-1)/2 \not \equiv 2^{s+1} \bmod 2^{s+2}\). Again, by induction hypothesis, there exists some \(k \in [2^s]_0\) such that \(\{(u-1)/2,(v-1)/2\} \in E_k^s\). Then we have \(\{u,v\} \in E_{2^s+k}^{s+1}\) by construction of \(E_{2^s+k}^{s+1}\).
			\item If one of \(u\) and \(v\) is odd and the other one is even, then \(d \equiv u-v \bmod 2^{s+3}\) is odd. By~\Cref{claim:odd} there is some \(k \in [2^{s+1}]_0\) such that the edge \(\{u,v\}\) is on the outer cycle of~\(E^{s+1}_k\).
		\end{itemize}
		Hence, every edge of \(G^{s+1}\) is contained in some edge set \(E^{s+1}_k\) for \(k \in [2^{s+1}]_0\). This completes the induction. Thus, the edge sets \(E^s_0, \dots, E^{s}_{2^s-1}\) are pairwise disjoint. 
		
		Finally, let us take a look at the maximum degree of the graphs~\((V^s,E^s_k)\). First, see~\Cref{fig:labeling-8} to note that the graph \((V^0,E_0^0)\) has maximum degree~3. Further, with each induction step, the maximum degree of the graph is increased by~$2$. More precisely, when constructing the two new graphs $(V^{s+1}, E_{k}^{s+1})$ and~$(V^{s+1}, E_{2^{s}+k}^{s+1})$ from $(V^{s}, E_{k}^{s})$, only in the fourth step the degree of vertices is changed at all, and in this step it is increased by~$2$. Thus, the maximum degree of~\((V^s,E^s_k)\) is \(2s+3\) for all \(k \in [2^s]_0\).
	\end{proof}
	
	\begin{figure}[t]
		\centering
		\begin{tikzpicture}[scale=0.65]
			\def \n {16} 
			\def \h {15} 
			\def \radius {1.5cm} 
			\def \newradius {1.75cm} 
			\def\labeldist{0.5cm}  
			
			\node[draw, circle, fill=black, inner sep=1.5pt] (0) at ({360/\n * 0}: \radius) {};
			\node[font=\small] at ({360/\n * 0}:{\radius + \labeldist}) {0};
			
			\node[draw, blue, circle, fill=blue, inner sep=1.5pt] (1) at ({360/\n * 1}: \newradius) {};
			\node[font=\small, text=blue] at ({360/\n * 1}:{\newradius + \labeldist}) {1};
			
			\node[draw, circle, fill=black, inner sep=1.5pt] (2) at ({360/\n * 2}: \radius) {};
			\node[font=\small] at ({360/\n * 2}:{\radius + \labeldist}) {2};
			
			\node[draw, blue, circle, fill=blue, inner sep=1.5pt] (3) at ({360/\n * 3}: \newradius) {};
			\node[font=\small, text=blue] at ({360/\n * 3}:{\newradius + \labeldist}) {3};
			
			\node[draw, circle, fill=black, inner sep=1.5pt] (4) at ({360/\n * 4}: \radius) {};
			\node[font=\small] at ({360/\n * 4}:{\radius + \labeldist}) {4};
			
			\node[draw, blue, circle, fill=blue, inner sep=1.5pt] (5) at ({360/\n * 5}: \newradius) {};
			\node[font=\small, text=blue] at ({360/\n * 5}:{\newradius + \labeldist}) {5};
			
			\node[draw, circle, fill=black, inner sep=1.5pt] (6) at ({360/\n * 6}: \radius) {};
			\node[font=\small] at ({360/\n * 6}:{\radius + \labeldist}) {6};
			
			\node[draw, blue, circle, fill=blue, inner sep=1.5pt] (7) at ({360/\n * 7}: \newradius) {};
			\node[font=\small, text=blue] at ({360/\n * 7}:{\newradius + \labeldist}) {7};
			
			\node[draw, circle, fill=black, inner sep=1.5pt] (8) at ({360/\n * 8}: \radius) {};
			\node[font=\small] at ({360/\n * 8}:{\radius + \labeldist}) {8};
			
			\node[draw, blue, circle, fill=blue, inner sep=1.5pt] (9) at ({360/\n * 9}: \newradius) {};
			\node[font=\small, text=blue] at ({360/\n * 9}:{\newradius + \labeldist}) {9};
			
			\node[draw, circle, fill=black, inner sep=1.5pt] (10) at ({360/\n * 10}: \radius) {};
			\node[font=\small] at ({360/\n * 10}:{\radius + \labeldist}) {10};
			
			\node[draw, blue, circle, fill=blue, inner sep=1.5pt] (11) at ({360/\n * 11}: \newradius) {};
			\node[font=\small, text=blue] at ({360/\n * 11}:{\newradius + \labeldist}) {11};
			
			\node[draw, circle, fill=black, inner sep=1.5pt] (12) at ({360/\n * 12}: \radius) {};
			\node[font=\small] at ({360/\n * 12}:{\radius + \labeldist}) {12};
			
			\node[draw, blue, circle, fill=blue, inner sep=1.5pt] (13) at ({360/\n * 13}: \newradius) {};
			\node[font=\small, text=blue] at ({360/\n * 13}:{\newradius + \labeldist}) {13};
			
			\node[draw, circle, fill=black, inner sep=1.5pt] (14) at ({360/\n * 14}: \radius) {};
			\node[font=\small] at ({360/\n * 14}:{\radius + \labeldist}) {14};
			
			\node[draw, blue, circle, fill=blue, inner sep=1.5pt] (15) at ({360/\n * 15}: \newradius) {};
			\node[font=\small, text=blue] at ({360/\n * 15}:{\newradius + \labeldist}) {15};
			
			\foreach \i/\j in {0/1, 1/2, 2/3, 3/4, 4/5, 5/6, 6/7, 7/8,8/9,9/10,10/11,11/12,12/13,13/14,14/15,15/0} {
				\draw[blue] (\i) -- (\j);
			}
			
			\foreach \i/\j in {0/2, 2/4, 4/6, 6/8, 8/10, 10/12, 12/14, 14/0, 0/4, 4/8, 8/12, 12/0, 0/8} {
				\draw (\i) -- (\j);
			}
		\end{tikzpicture}
		\hspace{-0.1cm}
		\begin{tikzpicture}[scale=0.65]
			\def \n {16} 
			\def \h {15} 
			\def \radius {1.5cm} 
			\def \newradius {1.75cm} 
			\def\labeldist{0.5cm}  
			
			\node[draw, circle, fill=black, inner sep=1.5pt] (0) at ({360/\n * 0}: \radius) {};
			\node[font=\small] at ({360/\n * 0}:{\radius + \labeldist}) {1};
			
			\node[draw, blue, circle, fill=blue, inner sep=1.5pt] (1) at ({360/\n * 1}: \newradius) {};
			\node[font=\small, text=blue] at ({360/\n * 1}:{\newradius + \labeldist}) {10};
			
			\node[draw, circle, fill=black, inner sep=1.5pt] (2) at ({360/\n * 2}: \radius) {};
			\node[font=\small] at ({360/\n * 2}:{\radius + \labeldist}) {3};
			
			\node[draw, blue, circle, fill=blue, inner sep=1.5pt] (3) at ({360/\n * 3}: \newradius) {};
			\node[font=\small, text=blue] at ({360/\n * 3}:{\newradius + \labeldist}) {12};
			
			\node[draw, circle, fill=black, inner sep=1.5pt] (4) at ({360/\n * 4}: \radius) {};
			\node[font=\small] at ({360/\n * 4}:{\radius + \labeldist}) {5};
			
			\node[draw, blue, circle, fill=blue, inner sep=1.5pt] (5) at ({360/\n * 5}: \newradius) {};
			\node[font=\small, text=blue] at ({360/\n * 5}:{\newradius + \labeldist}) {14};
			
			\node[draw, circle, fill=black, inner sep=1.5pt] (6) at ({360/\n * 6}: \radius) {};
			\node[font=\small] at ({360/\n * 6}:{\radius + \labeldist}) {7};
			
			\node[draw, blue, circle, fill=blue, inner sep=1.5pt] (7) at ({360/\n * 7}: \newradius) {};
			\node[font=\small, text=blue] at ({360/\n * 7}:{\newradius + \labeldist}) {0};
			
			\node[draw, circle, fill=black, inner sep=1.5pt] (8) at ({360/\n * 8}: \radius) {};
			\node[font=\small] at ({360/\n * 8}:{\radius + \labeldist}) {9};
			
			\node[draw, blue, circle, fill=blue, inner sep=1.5pt] (9) at ({360/\n * 9}: \newradius) {};
			\node[font=\small, text=blue] at ({360/\n * 9}:{\newradius + \labeldist}) {2};
			
			\node[draw, circle, fill=black, inner sep=1.5pt] (10) at ({360/\n * 10}: \radius) {};
			\node[font=\small] at ({360/\n * 10}:{\radius + \labeldist}) {11};
			
			\node[draw, blue, circle, fill=blue, inner sep=1.5pt] (11) at ({360/\n * 11}: \newradius) {};
			\node[font=\small, text=blue] at ({360/\n * 11}:{\newradius + \labeldist}) {4};
			
			\node[draw, circle, fill=black, inner sep=1.5pt] (12) at ({360/\n * 12}: \radius) {};
			\node[font=\small] at ({360/\n * 12}:{\radius + \labeldist}) {13};
			
			\node[draw, blue, circle, fill=blue, inner sep=1.5pt] (13) at ({360/\n * 13}: \newradius) {};
			\node[font=\small, text=blue] at ({360/\n * 13}:{\newradius + \labeldist}) {6};
			
			\node[draw, circle, fill=black, inner sep=1.5pt] (14) at ({360/\n * 14}: \radius) {};
			\node[font=\small] at ({360/\n * 14}:{\radius + \labeldist}) {15};
			
			\node[draw, blue, circle, fill=blue, inner sep=1.5pt] (15) at ({360/\n * 15}: \newradius) {};
			\node[font=\small, text=blue] at ({360/\n * 15}:{\newradius + \labeldist}) {8};
			
			\foreach \i/\j in {0/1, 1/2, 2/3, 3/4, 4/5, 5/6, 6/7, 7/8,8/9,9/10,10/11,11/12,12/13,13/14,14/15,15/0} {
				\draw[blue] (\i) -- (\j);
			}
			
			\foreach \i/\j in {0/2, 2/4, 4/6, 6/8, 8/10, 10/12, 12/14, 14/0, 0/4, 4/8, 8/12, 12/0, 0/8} {
				\draw (\i) -- (\j);
			}
		\end{tikzpicture}
		\hspace{-0.1cm}
		\begin{tikzpicture}[scale=0.65]
			\def \n {16} 
			\def \h {15} 
			\def \radius {1.5cm} 
			\def \newradius {1.75cm} 
			\def\labeldist{0.5cm}  
			
			\node[draw, circle, fill=black, inner sep=1.5pt] (0) at ({360/\n * 0}: \radius) {};
			\node[font=\small] at ({360/\n * 0}:{\radius + \labeldist}) {2};
			
			\node[draw, blue, circle, fill=blue, inner sep=1.5pt] (1) at ({360/\n * 1}: \newradius) {};
			\node[font=\small, text=blue] at ({360/\n * 1}:{\newradius + \labeldist}) {7};
			
			\node[draw, circle, fill=black, inner sep=1.5pt] (2) at ({360/\n * 2}: \radius) {};
			\node[font=\small] at ({360/\n * 2}:{\radius + \labeldist}) {12};
			
			\node[draw, blue, circle, fill=blue, inner sep=1.5pt] (3) at ({360/\n * 3}: \newradius) {};
			\node[font=\small, text=blue] at ({360/\n * 3}:{\newradius + \labeldist}) {1};
			
			\node[draw, circle, fill=black, inner sep=1.5pt] (4) at ({360/\n * 4}: \radius) {};
			\node[font=\small] at ({360/\n * 4}:{\radius + \labeldist}) {6};
			
			\node[draw, blue, circle, fill=blue, inner sep=1.5pt] (5) at ({360/\n * 5}: \newradius) {};
			\node[font=\small, text=blue] at ({360/\n * 5}:{\newradius + \labeldist}) {11};
			
			\node[draw, circle, fill=black, inner sep=1.5pt] (6) at ({360/\n * 6}: \radius) {};
			\node[font=\small] at ({360/\n * 6}:{\radius + \labeldist}) {0};
			
			\node[draw, blue, circle, fill=blue, inner sep=1.5pt] (7) at ({360/\n * 7}: \newradius) {};
			\node[font=\small, text=blue] at ({360/\n * 7}:{\newradius + \labeldist}) {5};
			
			\node[draw, circle, fill=black, inner sep=1.5pt] (8) at ({360/\n * 8}: \radius) {};
			\node[font=\small] at ({360/\n * 8}:{\radius + \labeldist}) {10};
			
			\node[draw, blue, circle, fill=blue, inner sep=1.5pt] (9) at ({360/\n * 9}: \newradius) {};
			\node[font=\small, text=blue] at ({360/\n * 9}:{\newradius + \labeldist}) {15};
			
			\node[draw, circle, fill=black, inner sep=1.5pt] (10) at ({360/\n * 10}: \radius) {};
			\node[font=\small] at ({360/\n * 10}:{\radius + \labeldist}) {4};
			
			\node[draw, blue, circle, fill=blue, inner sep=1.5pt] (11) at ({360/\n * 11}: \newradius) {};
			\node[font=\small, text=blue] at ({360/\n * 11}:{\newradius + \labeldist}) {9};
			
			\node[draw, circle, fill=black, inner sep=1.5pt] (12) at ({360/\n * 12}: \radius) {};
			\node[font=\small] at ({360/\n * 12}:{\radius + \labeldist}) {14};
			
			\node[draw, blue, circle, fill=blue, inner sep=1.5pt] (13) at ({360/\n * 13}: \newradius) {};
			\node[font=\small, text=blue] at ({360/\n * 13}:{\newradius + \labeldist}) {3};
			
			\node[draw, circle, fill=black, inner sep=1.5pt] (14) at ({360/\n * 14}: \radius) {};
			\node[font=\small] at ({360/\n * 14}:{\radius + \labeldist}) {8};
			
			\node[draw, blue, circle, fill=blue, inner sep=1.5pt] (15) at ({360/\n * 15}: \newradius) {};
			\node[font=\small, text=blue] at ({360/\n * 15}:{\newradius + \labeldist}) {13};
			
			\foreach \i/\j in {0/1, 1/2, 2/3, 3/4, 4/5, 5/6, 6/7, 7/8,8/9,9/10,10/11,11/12,12/13,13/14,14/15,15/0} {
				\draw[blue] (\i) -- (\j);
			}
			
			\foreach \i/\j in {0/2, 2/4, 4/6, 6/8, 8/10, 10/12, 12/14, 14/0, 0/4, 4/8, 8/12, 12/0, 0/8} {
				\draw (\i) -- (\j);
			}
		\end{tikzpicture}
		\hspace{-0.1cm}
		\begin{tikzpicture}[scale=0.65]
			\def \n {16} 
			\def \h {15} 
			\def \radius {1.5cm} 
			\def \newradius {1.75cm} 
			\def\labeldist{0.5cm}  
			
			\node[draw, circle, fill=black, inner sep=1.5pt] (0) at ({360/\n * 0}: \radius) {};
			\node[font=\small] at ({360/\n * 0}:{\radius + \labeldist}) {3};
			
			\node[draw, blue, circle, fill=blue, inner sep=1.5pt] (1) at ({360/\n * 1}: \newradius) {};
			\node[font=\small, text=blue] at ({360/\n * 1}:{\newradius + \labeldist}) {0};
			
			\node[draw, circle, fill=black, inner sep=1.5pt] (2) at ({360/\n * 2}: \radius) {};
			\node[font=\small] at ({360/\n * 2}:{\radius + \labeldist}) {13};
			
			\node[draw, blue, circle, fill=blue, inner sep=1.5pt] (3) at ({360/\n * 3}: \newradius) {};
			\node[font=\small, text=blue] at ({360/\n * 3}:{\newradius + \labeldist}) {10};
			
			\node[draw, circle, fill=black, inner sep=1.5pt] (4) at ({360/\n * 4}: \radius) {};
			\node[font=\small] at ({360/\n * 4}:{\radius + \labeldist}) {7};
			
			\node[draw, blue, circle, fill=blue, inner sep=1.5pt] (5) at ({360/\n * 5}: \newradius) {};
			\node[font=\small, text=blue] at ({360/\n * 5}:{\newradius + \labeldist}) {4};
			
			\node[draw, circle, fill=black, inner sep=1.5pt] (6) at ({360/\n * 6}: \radius) {};
			\node[font=\small] at ({360/\n * 6}:{\radius + \labeldist}) {1};
			
			\node[draw, blue, circle, fill=blue, inner sep=1.5pt] (7) at ({360/\n * 7}: \newradius) {};
			\node[font=\small, text=blue] at ({360/\n * 7}:{\newradius + \labeldist}) {14};
			
			\node[draw, circle, fill=black, inner sep=1.5pt] (8) at ({360/\n * 8}: \radius) {};
			\node[font=\small] at ({360/\n * 8}:{\radius + \labeldist}) {11};
			
			\node[draw, blue, circle, fill=blue, inner sep=1.5pt] (9) at ({360/\n * 9}: \newradius) {};
			\node[font=\small, text=blue] at ({360/\n * 9}:{\newradius + \labeldist}) {8};
			
			\node[draw, circle, fill=black, inner sep=1.5pt] (10) at ({360/\n * 10}: \radius) {};
			\node[font=\small] at ({360/\n * 10}:{\radius + \labeldist}) {5};
			
			\node[draw, blue, circle, fill=blue, inner sep=1.5pt] (11) at ({360/\n * 11}: \newradius) {};
			\node[font=\small, text=blue] at ({360/\n * 11}:{\newradius + \labeldist}) {2};
			
			\node[draw, circle, fill=black, inner sep=1.5pt] (12) at ({360/\n * 12}: \radius) {};
			\node[font=\small] at ({360/\n * 12}:{\radius + \labeldist}) {15};
			
			\node[draw, blue, circle, fill=blue, inner sep=1.5pt] (13) at ({360/\n * 13}: \newradius) {};
			\node[font=\small, text=blue] at ({360/\n * 13}:{\newradius + \labeldist}) {12};
			
			\node[draw, circle, fill=black, inner sep=1.5pt] (14) at ({360/\n * 14}: \radius) {};
			\node[font=\small] at ({360/\n * 14}:{\radius + \labeldist}) {9};
			
			\node[draw, blue, circle, fill=blue, inner sep=1.5pt] (15) at ({360/\n * 15}: \newradius) {};
			\node[font=\small, text=blue] at ({360/\n * 15}:{\newradius + \labeldist}) {6};
			
			\foreach \i/\j in {0/1, 1/2, 2/3, 3/4, 4/5, 5/6, 6/7, 7/8,8/9,9/10,10/11,11/12,12/13,13/14,14/15,15/0} {
				\draw[blue] (\i) -- (\j);
			}
			
			\foreach \i/\j in {0/2, 2/4, 4/6, 6/8, 8/10, 10/12, 12/14, 14/0, 0/4, 4/8, 8/12, 12/0, 0/8} {
				\draw (\i) -- (\j);
			}
		\end{tikzpicture}
		\caption{From left to right the graphs \((V^2,E^2_0)\), \((V^2,E^2_2)\), \((V^2,E^2_1)\), and \((V^2,E^2_3)\) from the proof of~\Cref{lem:optimal-ot-power-2-construct}. The new vertices and edges are indicated in blue.}
		\label{fig:labeling-16}
	\end{figure}
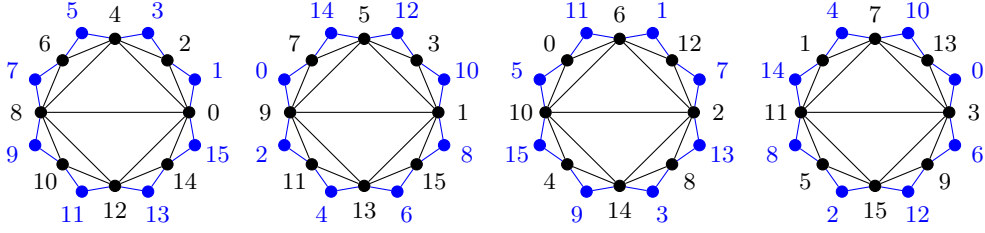
	
	As a next step, we show that the number of \(4t\) vertices for \(t\) edge-disjoint maximal outerplanar graphs is tight.
	\begin{lemma}
		\label{lem:lower-bound}
		Let $t \ge 2$. If there exists a set of \(t\) edge-disjoint maximal outerplanar graphs on \(n\) vertices, then \(n\geq 4t\).
	\end{lemma}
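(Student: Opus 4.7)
The plan is to use a simple edge-counting argument. Since every maximal outerplanar graph on $n \geq 3$ vertices has exactly $2n - 3$ edges, any set of $t$ edge-disjoint such graphs on $n$ vertices contains $t(2n-3)$ edges in total, and these must all fit into $K_n$. So the governing inequality is $t(2n - 3) \leq \binom{n}{2}$, and the task reduces to showing that this already forces $n \geq 4t$ whenever $t \geq 2$.

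The first step I would take is to test the inequality at the boundary $n = 4t - 1$. Direct substitution gives $t\bigl(2(4t-1) - 3\bigr) = 8t^2 - 5t$ on the left and $\binom{4t-1}{2} = (4t-1)(4t-2)/2 = 8t^2 - 6t + 1$ on the right, so the left side exceeds the right side by exactly $t - 1$. For $t \geq 2$ this is strictly positive, so the inequality is already violated at $n = 4t - 1$. To extend this to every smaller $n$, I would rewrite the edge-counting inequality as the convex quadratic condition $n^2 - (4t+1)n + 6t \geq 0$ and examine its larger root $n_+ = \bigl(4t + 1 + \sqrt{16t^2 - 16t + 1}\bigr)/2$. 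A short squaring check shows $4t - 1 < n_+ < 4t$ for every $t \geq 2$, and the smaller root lies below $2$ (verified in the same way), so it is excluded once $n \geq 3$. Hence any admissible $n$ satisfies $n \geq n_+$, forcing $n \geq 4t$.

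There is no substantive obstacle in this proof; it amounts to one counting inequality together with an inspection of a quadratic. The only care needed is the sign check on $t - 1$, which is precisely what forces the hypothesis $t \geq 2$: at $t = 1$ the discrepancy vanishes and the bound allows $n = 3 = 4t - 1$, a single triangle, consistent with the known boundary behaviour.
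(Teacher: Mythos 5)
Your proof is correct and follows essentially the same route as the paper: both reduce the claim to the counting inequality $t(2n-3)\le\binom{n}{2}$, rewrite it as $n^2-(4t+1)n+6t\ge 0$, and locate $n$ relative to the roots of this quadratic, using that the polynomial is negative at $n=4t-1$ precisely when $t\ge 2$. The only cosmetic differences are that you bound the smaller root by $2$ where the paper bounds it by $4$, and you add the (unneeded but harmless) upper bound $n_+<4t$.
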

	\begin{proof}
		Since every graph with at most three vertices is outerplanar, $t \ge 2$ implies that $n \ge 4$.
		Let~\((V,E_0), \dots, (V,E_{t-1})\) be edge-disjoint maximal outerplanar graphs on \(n\) vertices.
		Let~\(G\) be the graph with vertex set \(V\) and edge set \(E_0 \cup \dots \cup E_{t-1}\). 
		Since~\(G\) has \(t(2n-3)\) edges, \(t(2n-3) \le \binom{n}{2}\) holds. Thus, we have $n^{2} - (4t+1)n + 6t \ge 0$.
		Let~$r_{1}$ and~$r_{2}$ with~$r_{1} \le r_{2}$ be the roots of the function $f(x) = x^{2} - (4t+1)x + 6t$.
		Since we have~$f(n) \ge 0$, either~$n \le r_{1}$ or~$n \ge r_{2}$ holds.
		A simple calculation shows that $r_{1} < n$ as follows:
		\begin{align*}
			r_{1}
			=
			\frac{(4t+1) - \sqrt{(4t+1)^{2} - 24t}}{2}
			<
			\frac{(4t+1) - (4t-7)}{2}
			=
			4
			\le n,
		\end{align*}
		where $\sqrt{(4t+1)^{2} - 24t} > 4t - 7$ holds by $t \ge 2$.
		Thus we have $n \ge r_{2}$. Since $f(x) \ge 0$ for $x \ge r_{2}$ and $f(4t-1) = -2t+2 < 0$ as $t \ge 2$, we have $4t-1 < r_{2}$.
		This implies that~$n \ge 4t$.
	\end{proof}
	
	Note that for \(t=1\) the statement of~\Cref{lem:lower-bound} does not hold, as $K_3$ is a maximal outerplanar graph with only three vertices.
	
	As a next step, we show that every set of \(t\) edge-disjoint maximal outerplanar graphs on \(n\) vertices can be extended to one on \(n+1\) vertices.
	
	\begin{lemma}
		\label{lem:optimal-increase-vertices}
		Let \(t \in \N\). If there exist \(t\) edge-disjoint maximal outerplanar graphs on \(n\) vertices, then there also exist \(t\) edge-disjoint maximal outerplanar graphs on \(n+1\) vertices.
	\end{lemma}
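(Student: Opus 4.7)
The plan is to construct the extended graphs by adding a single new vertex $v$ and, for each $i \in [t]_{0}$, inserting $v$ into the outer cycle of $(V,E_{i})$ between two consecutive vertices $u_{i},w_{i}$. Concretely, I would set $E_{i}' := E_{i} \cup \{\{v,u_{i}\},\{v,w_{i}\}\}$. To see that $(V \cup \{v\}, E_{i}')$ is outerplanar, take the original outerplanar embedding of $(V,E_{i})$ and place $v$ in its outer face just outside the edge $\{u_{i},w_{i}\}$, routing the two new edges through the outer face so that the new unbounded face is bounded by the old Hamiltonian cycle with the edge $\{u_{i},w_{i}\}$ replaced by the path $u_{i}vw_{i}$. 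All $n+1$ vertices lie on that face, and the graph has $(2n-3)+2 = 2(n+1)-3$ edges, which is the maximum possible for an outerplanar graph on $n+1$ vertices, so it is maximal outerplanar.

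Since every newly added edge is incident to $v \notin V$, the old edges in distinct $E_{i}'$ remain pairwise disjoint. The only nontrivial requirement is that the new edges incident to $v$ be pairwise disjoint across the $t$ graphs, which amounts to the $2t$ chosen endpoints $u_{0},w_{0},\dots,u_{t-1},w_{t-1}$ being all distinct. The entire proof therefore reduces to choosing, for each $i$, an edge $e_{i}=\{u_{i},w_{i}\}$ of the (unique) outer cycle $C_{i}$ of $(V,E_{i})$ such that no vertex of $V$ appears in two different $e_{i}$.

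This combinatorial selection is the only real step of the argument, and I would handle it by a simple greedy procedure. For $t=1$ any edge of the single outer cycle works. For $t \ge 2$, \Cref{lem:lower-bound} gives $n \ge 4t$, and I would process $i = 0,1,\dots,t-1$ in order, choosing at step $i$ any edge of $C_{i}$ avoiding the $2i$ vertices already used. Each previously chosen vertex is an endpoint of exactly two edges of $C_{i}$, so at most $4i$ edges of $C_{i}$ are blocked, leaving at least $n - 4i \ge 4t - 4(t-1) = 4$ admissible choices at every step. The greedy procedure therefore always succeeds, producing vertex-disjoint edges $e_{0},\dots,e_{t-1}$ and completing the construction.
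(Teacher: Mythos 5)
Your proposal is correct and follows essentially the same route as the paper: attach the new vertex to both endpoints of one outer-cycle edge per graph, reduce everything to finding a system of vertex-disjoint outer-cycle edges, and obtain that matching greedily using $n \ge 4t$ from \Cref{lem:lower-bound} with the same ``at most $4i$ blocked edges at step $i$'' count. The only cosmetic difference is the $t=1$ case, which the paper dispatches via the equivalence of maximality and optimality while you apply the same construction directly; both are fine.
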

	
	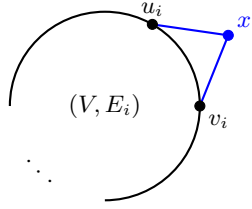
\begin{figure}
		\centering
		\begin{tikzpicture}[scale=1.25]
			\def\r{1} 
			\def\startAngle{180} 
			\def\missingAngle{90} 
			
			\draw[thick]
			({\startAngle+\missingAngle}:\r) arc[start angle={\startAngle+\missingAngle}, end angle={360+\startAngle}, radius=\r];
			
			\draw[decorate, decoration={text along path, text align=center,text={. . .}}]
			({\startAngle}:{\r}) arc[start angle={\startAngle}, end angle={\startAngle+\missingAngle}, radius=\r];
			
			\draw[thick, blue] (0:\r) -- (30:1.5);   
			\draw[thick, blue] (60:\r) -- (30:1.5);  
			
			\filldraw[black] (0:\r) circle (1.5pt) node[below right] {$v_i$};
			\filldraw[black] (60:\r) circle (1.5pt) node[above] {$u_i$};
			\filldraw[blue] (30:1.5) circle (1.5pt) node[above right] {$x$};
			
			\node at (0,0) {\((V,E_i)\)};
		\end{tikzpicture}
		\caption{The outerplanar embedding of \((V\cup \{x\},E_i \cup \{\{u_i,x\},\{v_i,x\}\})\) described in the proof of \Cref{lem:optimal-increase-vertices}. The edge \(\{u_i,v_i\}\) is on the outer cycle of the outerplanar embedding of~\((V,E_i)\). The new vertex \(x\) and the new edges are indicated in blue.}
		\label{fig:triangle-extension}
	\end{figure}	
	
	\begin{proof}
		For $t = 1$, the equivalence of maximality and optimality for outerplanar graphs implies the statement. In the following, we assume that $t \ge 2$. Let~\((V,E_0), \dots, (V,E_{t-1})\) be edge-disjoint maximal outerplanar graphs on \(n\) vertices. By~\Cref{lem:lower-bound}, we have $n \ge 4t$. We show that there is a set of edges~\(\{\{u_0,v_0\}, \dots, \{u_{t-1},v_{t-1}\}\}\) such that each edge \(\{u_i,v_i\}\) is in the outer cycle of~\((V,E_i)\) and the set forms a matching, i.e., no pair of edges in the set shares an endpoint. Assuming that we have such a set of edges, we obtain a set of \(t\) edge-disjoint maximal outerplanar graphs on \(n+1\) vertices as follows: Add a new vertex~\(x\) together with the edges \(\{u_i,x\}\) and \(\{v_i,x\}\) to each \((V,E_i)\). The resulting graphs each have \(2n-3+2=2(n+1)-3\) edges. Further, they are outerplanar as we can take the outerplanar embedding of \((V,E_i)\) where the edge \(\{u_i,v_i\}\) is on the outer cycle and embed~\(x\) next to this edge outside of the outer cycle creating a triangle as depicted in~\Cref{fig:triangle-extension}.
		
		Hence, it remains to show that a set of edges with these properties exists.
		We iteratively construct such a set of edges \(S\), beginning with \(S=\emptyset\). The outer cycle of each~\((V,E_i)\) contains \(n>4(t-1)\) edges. We call an edge on the outer cycle of some~\((V,E_i)\) \emph{blocked}, if at least one of its endpoints is already an endpoint of an edge contained in \(S\). Now we show that for each \({i \in [t]_0}\) we can iteratively add one non-blocked edge of the outer cycle of~\((V,E_i)\) to \(S\):
		In step~\(i\), there are at most \(4i\) blocked edges on the outer cycle of~\((V,E_i)\), as~\(S\) contains \(i\) many edges and each edge corresponds to two vertices, which again each block at most two edges in the outer cycle. Since the outer cycle contains \(n>4(t-1)\) many edges, we find a non-blocked edge in each step. Thus, such a set of edges exists.
	\end{proof}
	
	\section{Optimal outerthickness-$t$ graphs}
	
	In this section, we put together the results to conclude our main theorem.
	
	\begin{theorem}
		\label{thm:main}
		For every $t \in \N_{>0}$ and $n \geq 4t$, there exists an optimal outerthick\-ness-$t$ graph on $n$ vertices.
	\end{theorem}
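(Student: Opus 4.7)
The plan is to chain the three structural results of \Cref{sec:edge-dis-max-out-plan} into a direct proof. First, I would invoke \Cref{lem:optimal-ot-construct} to obtain $t$ edge-disjoint maximal outerplanar graphs on exactly $4t$ vertices, which settles the base case $n = 4t$. From there, I would induct on $n$: at each step, \Cref{lem:optimal-increase-vertices} produces a witnessing family on $n+1$ vertices from one on $n$ vertices, so $t$ edge-disjoint maximal outerplanar graphs $(V, E_{0}), \dots, (V, E_{t-1})$ exist on any prescribed number of vertices $n \ge 4t$.

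Having such a family, I would let $G$ be the graph with vertex set $V$ and edge set $E_{0} \cup \dots \cup E_{t-1}$. Since the $E_{i}$ are pairwise disjoint and each maximal outerplanar summand contributes exactly $2n - 3$ edges, $G$ has $t(2n - 3)$ edges. The partition $\{E_{0}, \dots, E_{t-1}\}$ directly witnesses that the outerthickness of $G$ is at most $t$.

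The only remaining point is to rule out that $G$ has outerthickness strictly less than $t$. This follows immediately from the inequality $|E(H)| \le t'(2|V(H)| - 3)$ that holds for every outerthickness-$t'$ graph $H$ on at least two vertices, as recorded in the preliminaries: an outerthickness of $t - 1$ or less would force fewer edges than the $t(2n - 3)$ that $G$ actually has. Hence the outerthickness of $G$ is exactly $t$, and $G$ attains the maximum edge count, making it an optimal outerthickness-$t$ graph. Since the argument merely stitches together the earlier lemmas, no substantial obstacle arises; the only subtlety is this final tightness check, which is handled cleanly by the preliminary edge-count bound.
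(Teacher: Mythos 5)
Your proposal is correct and follows essentially the same route as the paper: the base case on $4t$ vertices comes from \Cref{lem:optimal-ot-construct} and the extension to all $n \ge 4t$ from \Cref{lem:optimal-increase-vertices}. You merely spell out the (correct) final step that the union of the $t$ edge-disjoint maximal outerplanar graphs has exactly $t(2n-3)$ edges and hence outerthickness exactly $t$, which the paper leaves implicit.
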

	
	\begin{proof}
		The existence of an optimal outerthickness \(t\) graph on precisely \(4t\) vertices follows from \Cref{lem:optimal-ot-construct}. Then by \Cref{lem:optimal-increase-vertices}, we obtain optimal outerthickness-\(t\) graphs on every number of \(n \geq 4t\) vertices.
	\end{proof}
	
	Note that we could have alternatively proven a similar statement to \Cref{thm:main} by using~\Cref{lem:optimal-ot-power-2-construct} to obtain optimal outerthickness-\(t\) graphs whenever \(t\) is a power of~$2$, and then forgetting about certain sets of the edge partition. 
	
	Further, the lower bound on the number of vertices from \Cref{lem:lower-bound} allows us to separate the notions of maximal and optimal outerthickness-\(t\) graphs for all values \(t \geq 2\).
	
	\begin{corollary}
		\label{cor:max-not-opt}
		For every \(t \geq 2\), there exist maximal but not optimal outerthick\-ness-\(t\) graphs.
	\end{corollary}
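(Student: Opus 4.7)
The plan is to build, for each $t \ge 2$, a maximal outerthickness-$t$ graph on exactly $n = 4t-1$ vertices. By \Cref{lem:lower-bound}, no such graph can be optimal: if it had $t(2n-3)$ edges, then every outerplanar piece in a decomposition into $t$ outerplanar subgraphs would have exactly $2n-3$ edges and hence be maximal, contradicting $n < 4t$. So non-optimality will come for free as soon as outerthickness exactly $t$ together with maximality are achieved on $4t-1$ vertices.

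To produce such a graph, I would start from an optimal outerthickness-$t$ graph $G^{*}$ on $4t$ vertices, obtained via \Cref{thm:main} (equivalently \Cref{lem:optimal-ot-construct}); this has $t(8t-3)$ edges. Delete an arbitrary vertex $v$ to obtain $G' := G^{*} - v$ on $4t-1$ vertices, and then iteratively add edges to $G'$ as long as the outerthickness stays equal to~$t$. Since the number of possible edges is finite, the process ends at some graph $G$ on $4t-1$ vertices which is, by construction, maximal outerthickness-$t$. Applying \Cref{lem:lower-bound} to $G$ then yields $|E(G)| < t(8t-5)$, so $G$ is maximal but not optimal, as required.

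The only delicate point is verifying that $G'$ already has outerthickness at least~$t$, so that the extension terminates at a graph whose outerthickness is exactly~$t$ rather than dropping to a smaller value. The upper bound is immediate: restricting each outerplanar subgraph in an optimal decomposition of $G^{*}$ to $V(G^{*})\setminus\{v\}$ yields $t$ outerplanar graphs covering $G'$. For the lower bound, use the trivial estimate $\deg_{G^{*}}(v) \le 4t-1$ to obtain
\[
|E(G')| \ge t(8t-3) - (4t-1) = 8t^{2} - 7t + 1,
\]
while the maximum number of edges in a union of $t-1$ outerplanar graphs on $4t-1$ vertices is $(t-1)(8t-5) = 8t^{2} - 13t + 5$. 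The gap $6t - 4$ is positive for every $t \ge 2$, which rules out outerthickness at most $t-1$ for $G'$. This edge-count is the main (and only mildly technical) step in the argument.
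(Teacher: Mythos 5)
Your proof is correct, but it takes a genuinely different route from the paper. The paper's proof is essentially a two-line citation: the complete graphs $K_{4t-4}$, $K_{4t-3}$, and $K_{4t-2}$ have outerthickness $t$ by Guy and Nowakowski, are trivially maximal (no edge can be added to a complete graph), and are not optimal because \Cref{lem:lower-bound} forbids optimal outerthickness-$t$ graphs on fewer than $4t$ vertices. You instead manufacture an example on $4t-1$ vertices from scratch: delete a vertex from the optimal graph of \Cref{thm:main}, verify via the edge count $t(8t-3)-(4t-1) = 8t^2-7t+1 > (t-1)(8t-5)$ that the outerthickness cannot drop below $t$, and then greedily saturate. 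All the arithmetic checks out, the monotonicity of outerthickness under edge addition guarantees the saturation terminates at a maximal outerthickness-$t$ graph, and the non-optimality argument (an optimal graph would decompose into $t$ maximal outerplanar pieces, forcing $n \ge 4t$ by \Cref{lem:lower-bound}) matches the paper's. What your approach buys is self-containment --- it avoids relying on the Guy--Nowakowski computation of the outerthickness of small complete graphs, which is the one external ingredient in the paper's proof --- and it yields examples on $4t-1$ vertices, complementing the paper's examples on $4t-4$ through $4t-2$ vertices. What it costs is explicitness: the saturated graph $G$ is defined only by a greedy process, whereas the paper's examples are concrete complete graphs (and the paper additionally exhibits $K_7 - e$ as a more interesting instance for $t=2$).
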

	
	\begin{proof}
		The outerthickness of complete graphs \(K_{4t-4},K_{4t-3},\) and \(K_{4t-2}\) is \(t\)~\cite{Guy1990}. Further, by~\Cref{lem:lower-bound} every optimal outerthick\-ness-\(t\) graph has at least \(4t\) vertices.
	\end{proof}
	
	Besides these easy examples of complete graphs, for \(t=2\) another maximal but not optimal outerthick\-ness-\(2\) graph is given by \(K_7-e\), i.e., the graph obtained from $K_{7}$ by removing an edge.
	A decomposition of \(K_7-e\) into two outerplanar graphs is given in~\Cref{fig:K7-e}.
	The outerthickness of \(K_7\) is $3$~\cite{Guy1990}, and thus $K_{7}-e$ is a maximal outerthick\-ness-\(2\) graph.
	On the other hand, \(K_7-e\) has \(20 < 2 \cdot (2 \cdot 7 - 3)\) edges, and so, it is not an optimal outerthick\-ness-\(2\) graph.
	
	Note that on eight vertices every maximal outerthickness-2 graph is also optimal.
	To see this, first observe that \(K_8-\{e,e'\}\) for two independent edges~\(e,e'\) is an optimal outerthick\-\mbox{ness-2} graph by our construction (cf.~\Cref{fig:labeling-8}).
	On the other hand, for an edge set~$F$ that does not contain two independent edges, \(K_8-F\) has \(K_7\) as a subgraph and hence, it has outerthickness~3.
	
	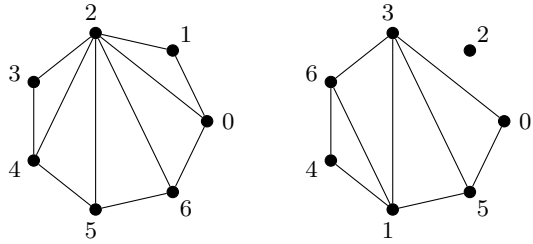
\begin{figure}[t]
		\centering
		\begin{tikzpicture}[scale=0.8]
			\foreach \i in {0,...,6} {
				
				\node[circle, draw, fill=black, inner sep=0pt, minimum size=1.5mm] (N\i) at ({360/7*\i}:1.5cm) {};
				
				\node[font=\small]
				at ({360/7 * \i}:{1.85cm}) {\i};
			}
			
			\draw (N0) -- (N1);
			\draw (N0) -- (N2);
			\draw (N0) -- (N6);
			\draw (N1) -- (N2);
			\draw (N2) -- (N6);
			\draw (N2) -- (N5);
			\draw (N2) -- (N4);
			\draw (N2) -- (N3);
			\draw (N3) -- (N4);
			\draw (N4) -- (N5);
			\draw (N5) -- (N6);
		\end{tikzpicture}
		\hspace{0.5cm}
		\begin{tikzpicture}[scale=0.8]
			\node[circle, draw, fill=black, inner sep=0pt, minimum size=1.5mm] (N0) at ({360/7*0}:1.5cm) {};
			\node[font=\small]
			at ({360/7 * 0}:{1.85cm}) {0};
			
			\node[circle, draw, fill=black, inner sep=0pt, minimum size=1.5mm] (N2) at ({360/7*1}:1.5cm) {};
			\node[font=\small]
			at ({360/7 * 1}:{1.85cm}) {2};
			
			\node[circle, draw, fill=black, inner sep=0pt, minimum size=1.5mm] (N3) at ({360/7*2}:1.5cm) {};
			\node[font=\small]
			at ({360/7 * 2}:{1.85cm}) {3};
			
			\node[circle, draw, fill=black, inner sep=0pt, minimum size=1.5mm] (N6) at ({360/7*3}:1.5cm) {};
			\node[font=\small]
			at ({360/7 * 3}:{1.85cm}) {6};
			
			\node[circle, draw, fill=black, inner sep=0pt, minimum size=1.5mm] (N4) at ({360/7*4}:1.5cm) {};
			\node[font=\small]
			at ({360/7 * 4}:{1.85cm}) {4};
			
			\node[circle, draw, fill=black, inner sep=0pt, minimum size=1.5mm] (N1) at ({360/7*5}:1.5cm) {};
			\node[font=\small]
			at ({360/7 * 5}:{1.85cm}) {1};
			
			\node[circle, draw, fill=black, inner sep=0pt, minimum size=1.5mm] (N5) at ({360/7*6}:1.5cm) {};
			\node[font=\small]
			at ({360/7 * 6}:{1.85cm}) {5};
			
			\draw (N0) -- (N3);
			\draw (N3) -- (N6);
			\draw (N3) -- (N1);
			\draw (N3) -- (N5);
			\draw (N6) -- (N4);
			\draw (N6) -- (N1);
			\draw (N4) -- (N1);
			\draw (N1) -- (N5);
			\draw (N5) -- (N0);
		\end{tikzpicture}
		\caption{A decomposition of \(K_7-e\) for \(e=\{0,4\}\) into two outerplanar graphs. This is an example for a maximal but not optimal outerthickness-2 graph, as well as an outerthickness-2 graph which is not 1-planar.}
		\label{fig:K7-e}
	\end{figure}
	
	We finish this section by concluding, that the existence of the optimal outerthick\-ness-\(2\) graphs allows to partially separate graphs with outerthickness~2 from 1-planar graphs. More precisely we obtain the following.
	
	\begin{corollary}\label{cor:separating-outerthickness2-1planar}
		There exists an infinite family of graphs with outerthickness~2 which are not 1-planar.
	\end{corollary}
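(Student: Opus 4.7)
The plan is to use the optimal outerthickness-$2$ graphs produced by \Cref{thm:main} and contrast their edge count with the well-known edge upper bound for $1$-planar graphs. By \Cref{thm:main} applied with $t = 2$, for every $n \geq 8$ there exists a graph $G_n$ on $n$ vertices with outerthickness exactly $2$ and with $2(2n-3) = 4n-6$ edges. As $n$ ranges over all integers at least $8$, these graphs have pairwise distinct orders and thus form an infinite family of outerthickness-$2$ graphs.

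It then remains to argue that none of the $G_n$ is $1$-planar. For this I would invoke the classical bound stating that every $1$-planar graph on $n \geq 3$ vertices has at most $4n-8$ edges (due to Bodendiek, Schumacher and Wagner, and also Pach and T\'{o}th). Since $G_n$ has $4n-6 > 4n-8$ edges for every $n \geq 8$, no $G_n$ can be $1$-planar. This completes the argument. There is no real obstacle in the proof: it is a direct combination of \Cref{thm:main} with a classical density bound, and the fact that optimal outerthickness-$2$ graphs are edge-maximal among $2$-page outerplanar decomposable graphs while $1$-planar graphs are strictly sparser.
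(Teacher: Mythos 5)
Your proof is correct and follows essentially the same route as the paper: take the optimal outerthickness-$2$ graphs on $n \geq 8$ vertices with $4n-6$ edges guaranteed by \Cref{thm:main} and observe that this exceeds the $4n-8$ edge bound for $1$-planar graphs. No issues.
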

	
	\begin{proof}
		A 1-planar graph with \(n\) vertices has at most \(4n-8\) edges~\cite{edge-num-1-planar}. Since an optimal outerthickness-2 graph on \(n\) vertices contains \(4n-6\) edges and exists by \Cref{thm:main}, the statement follows.
	\end{proof}
	
	Besides separating these two generalizations of planar graphs, these graphs are of particular interest since all 1-planar graphs are 6-colorable~\cite{borodin1984solution} and Gethner and Sulanke asked in~\cite{DBLP:journals/gc/GethnerS09} if the lower bound of the chromatic number of outerthickness~2 graphs can be raised above 6. Note that the outerthickness 2 graphs from~\Cref{lem:optimal-ot-construct} as well as~\Cref{lem:optimal-ot-power-2-construct} are both \(K_8\) minus a matching with two edges. Thus, they are both 6-colorable, even though they are not 1-planar.
	
	\section{Conclusion}
	
	We gave two different constructions for \(t\) edge-disjoint maximal outerplanar graphs on~\(4t\) vertices (\Cref{lem:optimal-ot-construct}, \Cref{lem:optimal-ot-power-2-construct}). From this we deduced the existence of optimal outerthick\-ness-\(t\) graphs on every number of \(n \geq 4t\) vertices for all values of \(t \in \N\) (\Cref{thm:main}). The required number of vertices, is tight~(\Cref{lem:lower-bound}).
	
	As a consequence we obtain an infinite family of graphs with outerthick\-\mbox{ness-2} which are not 1-planar (\Cref{cor:separating-outerthickness2-1planar}). The question whether there are 1-planar graphs with outerthickness precisely~3 remains open.
	
	Further, we give examples of maximal but not optimal outerthickness-$t$ graphs for all~$t \geq 2$ (\Cref{cor:max-not-opt}) and raise the question of the existence of an infinite family of such graphs.
	
	Finally, the most pressing open question is to determine the outerthickness of the remaining complete graphs, where all cases but \(n \equiv 3 \bmod 4\) are solved 
	by Guy and Nowakowski~\cite{Guy1990} (see also~\cite{GuyNowakowski1990b}).
	
	\subsection*{Acknowledgements}
	
	Yuto Okada received support from JST SPRING, Grant Number JPMJSP2125. Yota Otachi was partially supported by JSPS KAKENHI Grant Numbers
	JP22H00513, 
	JP24\-H00697, 
	JP25\-K03076, 
	JP25K03077. 
	
\bibliographystyle{plainurl}
\bibliography{ref}
	
\end{document}